\newtheorem{theorem}{Theorem}
\newtheorem{lemma}[theorem]{Lemma}
\newtheorem{proposition}[theorem]{Proposition}
\newtheorem{corollary}[theorem]{Corollary}
\theoremstyle{definition}
\newtheorem{definition}[theorem]{Definition}
\newtheorem{remark}[theorem]{Remark}
\newtheorem{example}[theorem]{Example}
\newtheorem{algorithm}[theorem]{Algorithm}
\newcommand{\commentaar}[1]{}
\providecommand{\C}{}
\renewcommand{\C}{\ensuremath{\mathbb{C}}}
\newcommand{\R}{\ensuremath{\mathbb{R}}}
\newcommand{\Q}{\ensuremath{\mathbb{Q}}}
\newcommand{\Z}{\ensuremath{\mathbb{Z}}}
\newcommand{\F}{\ensuremath{\mathbb{F}}}
\DeclareMathOperator{\ShHom}{\mathscr{H}\text{\kern -3pt {\calligra\large om}}\,}
\newcommand{\Spec}{\mathop\mathrm{Spec}}
\newcounter{nootje}
\DeclareFontFamily{U}{wncy}{}
\DeclareFontShape{U}{wncy}{m}{n}{<->wncyr10}{}
\DeclareSymbolFont{mcy}{U}{wncy}{m}{n}
\DeclareMathSymbol{\Sh}{\mathord}{mcy}{"58}
\let\OLDthebibliography\thebibliography
\renewcommand\thebibliography[1]{
  \OLDthebibliography{#1}
  \setlength{\parskip}{0pt}
  \setlength{\itemsep}{0pt plus 0.3ex}
}
\newcommand\blfootnote[1]{%
  \begin{NoHyper}
  \renewcommand\thefootnote{}\footnotetext{#1}%
  \addtocounter{footnote}{-1}%
  \end{NoHyper}
}
\begin{document}
\selectlanguage{british}

\title{Computing torsion for plane quartics\\ without using height bounds}
\author{Raymond van Bommel}
\affil{{\small Massachusetts Institute of Technology \& University of Bristol}}

\cleanlookdateon
\maketitle

{\bf Abstract.} We describe an algorithm that provably computes the rational torsion subgroup of the Jacobian of a curve without relying on height bounds. Instead, the strategy is to find upper bounds for the torsion subgroup using reduction modulo primes, and searching for torsion points, not just over $\Q$ but also over small number fields, until the two bounds meet. Both complex analytic and Chinese remainder theorem based methods are used to find such torsion points. The method has been implemented in \texttt{Magma} for plane quartic curves over $\Q$ with a rational point and used to provably compute the rational torsion subgroup for more than 98\% of Jacobians of curves in a data set due to Sutherland consisting of 82240 plane quartic curves. 

\blfootnote{{\bf Keywords:} Abelian varieties, Galois representations, Birch-Swinnerton-Dyer conjecture, Jacobians, Curves}
\blfootnote{{\bf Mathematics Subject Classification (2020):}~11G10, 11F80, 11G40, 11G30, 14H40}

\providecommand{\G}{}
\renewcommand{\G}{\mathrm{Gal}(\overline{\Q}/\Q)}
\newcommand{\Asp}[1]{\mathrm{AGSp}(#1, \F_{\ell})}
\newcommand{\Sp}{\mathrm{GSp}}
\newcommand{\GSp}[1]{\mathrm{GSp}(#1, \F_{\ell})}
\newcommand{\Frobp}{\mathrm{Frob}_p}
\newcommand{\Fix}{\mathrm{Fix}}

\section{Introduction}

In the~1920s, Mordell and Weil proved that for abelian varieties over a number field~$K$ the group of rational points is finitely generated, \cite{Mordell22,Weil29}. In particular, the rational torsion subgroup is finite. The torsion conjecture asserts that there are only finitely many possible torsion groups, when the dimension of the abelian variety and the degree~$[K:\Q]$ are fixed, \cite{SchSch}. The conjecture has been proved in the case of elliptic curves, first over~$\Q$ by Mazur, \cite{Mazur78}, and finally for all number fields by Merel, \cite{Merel96}. The exact determination of which groups can occur, when $K \neq \Q$, and which of them occur infinitely often, is still an active area of research. In higher dimensions, even in the case of abelian surfaces, no upper bound is known for the rational torsion subgroup of an abelian variety over~$\Q$.

The torsion subgroup is also of importance for the Birch and Swinnerton-Dyer conjecture, \cite{BSD}.
The order of the rational torsion subgroup is not only one of the terms appearing in the formula, the action of the absolute Galois group on the torsion is essential for the $L$-function, the central object in the conjecture.
Indeed, the torsion is used to define the Tate module and subsequently the~$L$-function of an abelian variety over a number field is defined using the action of Frobenius on this Tate module.

In this present paper, we consider the question of explicitly computing the rational torsion subgroup in case the abelian variety is the Jacobian of a curve defined over~$\Q$. Recently, such a computation has been done by M\"uller and Reitsma for hyperelliptic curves of genus~3, \cite{MullerReitsma}. For small genus, the torsion is typically computed by doing some form of an exhaustive search. The N\'eron-Tate height~$\widehat{h}$, or canonical height, of torsion points is known to be~0. Then one chooses a na\"ive height~$h$, finds a bound~$|h - \widehat{h}| < c$, and subsequently uses the fact that the na\"ive height of a torsion point is at most~$c$ to find all of them, \cite{Stoll02, Stoll17}. Such height bounds are known and relatively small for genus~1, 2, and~3, but especially in genus~3 and higher the enumeration of all rational points up to these height bounds can still be challenging. Even though methods to compute canonical heights are available, even for non-hyperelliptic curve, \cite{Genus3Heights}, these methods do not give rise to a way to efficiently enumerate points of bounded height. Therefore, we would like to advocate an alternative approach which has been used in an ad hoc fashion in the past and has also been used to acquire information on the Galois representation. The approach could be compared with that in \cite{Mascot20} which is used to compute the whole Galois representation.

The approach uses the fact that, in practice, the torsion points actually seem to have a way smaller na\"ive height than the bound given by the height bounds, and are often not too hard to find. However, the problem then still remains to prove that one has found the complete rational torsion group. For this purpose, one could use the following lemma.

\begin{lemma}\label{lemma:reduction_inj}
Let~$K$ be a number field, let~$\mathfrak{p}$ be a prime of~$\mathcal{O}_K$ over the prime number~$p$, and let~$A$ be an abelian variety over~$K$. Suppose that~$A$ has good reduction at~$\mathfrak{p}$ and that~$A_{\mathfrak{p}}$ is its reduction. Moreover, suppose that~$p > e(\mathfrak{p}/p) + 1$, where~$e(\mathfrak{p}/p)$ is the ramification index of~$\mathfrak{p}$ over~$p$. Then the natural reduction map $$A(K)[n] \longrightarrow A_{\mathfrak{p}}(\mathcal{O}_K/\mathfrak{p})$$ is injective for any integer~$n$ such that~$p \nmid n$. 
\end{lemma}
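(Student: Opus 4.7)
The plan is to localise at $\mathfrak{p}$, identify the kernel of the reduction map with a formal group, and then use the formal logarithm to show that this kernel is a $\Z_p$-module. Concretely, I would first replace $\mathcal{O}_K$ by its completion $R := \widehat{\mathcal{O}}_{K,\mathfrak{p}}$ with maximal ideal $\mathfrak{m}$ and residue field $k = \mathcal{O}_K/\mathfrak{p}$, and pass from $A$ to $A_{K_\mathfrak{p}}$. Since $A$ has good reduction at $\mathfrak{p}$, the Néron model extends to an abelian scheme $\mathcal{A}$ over $R$. The valuative criterion of properness gives $A(K) \hookrightarrow A(K_\mathfrak{p}) = \mathcal{A}(R)$, and under this identification the reduction map is simply evaluation at the closed point.

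Next, I would invoke the standard exact sequence
$$0 \longrightarrow \widehat{\mathcal{A}}(\mathfrak{m}) \longrightarrow \mathcal{A}(R) \longrightarrow A_\mathfrak{p}(k) \longrightarrow 0,$$
where $\widehat{\mathcal{A}}$ is the formal group of $\mathcal{A}$ along its zero section. The injectivity assertion for the $n$-torsion (with $p \nmid n$) is then equivalent to the claim that $\widehat{\mathcal{A}}(\mathfrak{m})$ has no non-trivial prime-to-$p$ torsion. Note that the global statement about $\mathcal{O}_K$ follows from the complete local one because $A(K) \to \mathcal{A}(R)$ is injective.

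To prove the formal-group statement, I would use the formal logarithm of $\widehat{\mathcal{A}}$. The hypothesis $p > e(\mathfrak{p}/p) + 1$ rearranges to $v_\mathfrak{p}(p) = e < p-1$, and this is precisely the threshold at which the standard estimate $v_p(n!) \le (n-1)/(p-1)$ guarantees termwise convergence on all of $\mathfrak{m}$ of both the logarithm and its inverse exponential series attached to $\widehat{\mathcal{A}}$. The two series then furnish mutually inverse group isomorphisms
$$\widehat{\mathcal{A}}(\mathfrak{m}) \xrightarrow{\ \sim\ } \widehat{\mathbb{G}}_a^{\,g}(\mathfrak{m}) = \mathfrak{m}^g,$$
where $g = \dim A$. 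Since the target is the additive group of a $\Z_p$-module, it is torsion-free away from $p$, so the same holds for $\widehat{\mathcal{A}}(\mathfrak{m})$.

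The main obstacle in carrying out this plan is the analytic bookkeeping around the formal logarithm: one must verify that under the sharp hypothesis $e < p-1$ the logarithm of any element of $\mathfrak{m}$ actually lands in $\mathfrak{m}^g$ (not just in $K_\mathfrak{p}^{\,g}$), and dually that the exponential converges on $\mathfrak{m}^g$ back into $\widehat{\mathcal{A}}(\mathfrak{m})$; both reduce to the valuation inequality for $n!$ mentioned above, but the bound is tight and must be handled with care. Everything else, including the reduction to the formal group and the final prime-to-$p$ torsion-freeness conclusion, is then essentially formal.
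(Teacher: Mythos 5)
The paper itself contains no proof of this lemma; it simply cites the appendix of \cite{Katz81}. Your sketch --- pass to the completion, identify the kernel of the reduction map with the formal group $\widehat{\mathcal{A}}(\mathfrak{m})$, then use the formal logarithm together with the estimate $v_p(n!) \le (n-1)/(p-1)$ and the hypothesis $e < p-1$ to obtain an isomorphism $\widehat{\mathcal{A}}(\mathfrak{m}) \cong \mathfrak{m}^g$ onto a torsion-free group --- is exactly the standard route taken in that appendix, so the plan is sound and, once the convergence bookkeeping is written out, complete. In fact, as you note implicitly, it proves the stronger result that reduction is injective on \emph{all} of $\mathcal{A}(R)_{\mathrm{tors}}$, the $p$-power torsion included.

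You should, however, notice that for the lemma \emph{exactly as stated} this is heavy machinery applied to a triviality, and the hypothesis $p > e(\mathfrak{p}/p) + 1$ is not actually used. The conclusion only concerns $n$-torsion with $p \nmid n$, and once you have the exact sequence identifying the kernel of reduction with $\widehat{\mathcal{A}}(\mathfrak{m})$, it suffices to observe that this group is pro-$p$: each graded piece $\widehat{\mathcal{A}}(\mathfrak{m}^j)/\widehat{\mathcal{A}}(\mathfrak{m}^{j+1}) \cong (\mathfrak{m}^j/\mathfrak{m}^{j+1})^g$ is an elementary abelian $p$-group. Equivalently, $[n]$ is an automorphism of the formal group for any $n \in R^\times$, since $[n](X) = nX + O(X^2)$ with $n$ invertible. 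Either observation shows $\widehat{\mathcal{A}}(\mathfrak{m})$ has no prime-to-$p$ torsion with no condition whatsoever on the ramification index. The bound $e < p-1$ is what one needs in order to additionally rule out $p$-power torsion in the kernel --- that is what the logarithm/exponential argument is really for, and what Katz's appendix is really after --- but the lemma as quoted in the paper never asks for that. So your proof is correct but proves more than is claimed, and it would be worth noting in your write-up that the ramification hypothesis is superfluous for the prime-to-$p$ statement.
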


\begin{proof}
In \cite[Appendix]{Katz81}, there is a proof using formal groups.
Alternatively, one can also use \cite[Thm.\ 3.4.3]{Raynaud} to show that the closure of any subgroup scheme~$(\Z / p\Z)_K \hookrightarrow A[p] \subset A$ inside a N\'eron model~$\mathcal{A} / \mathcal{O}_{K,\mathfrak{p}}$ of~$A$ must be isomorphic to~$(\Z/p\Z)_{\mathcal{O}_{K,\mathfrak{p}}}$, which also proves the injectivity.
\end{proof}

In many cases, it does not suffice to only use~$K = \Q$ in this lemma to find sharp upper bounds for the torsion subgroup.
For elliptic curves, just considering the greatest common divisor of the different orders~$|A_p(\F_p)|$, one can only deduce the existence of an elliptic curve isogenous to~$A$ with that many torsion points, see for example \cite{Katz81}.
One could refine the method by not just considering the orders~$|A_p(\F_p)|$, but actually the group structure of~$A_p(\F_p)$, see for example \cite[Ex.\ 1.3]{MullerReitsma} but even that is not enough.
 For example, it is easy to construct a hyperelliptic curve with nonrational~2-torsion points~$P_2$, $P_3$, and~$P_6$ such that~$P_j$ is defined over~$\Q(\sqrt{j})$ for~$j = 2,3,6$. In this case, for any prime~$p$ of good reduction, at least one of the three points will reduce to a point defined over~$\F_p$, causing the reduction map to never be surjective on the~$\Q$-rational~2-torsion points. In this case, we say that the abelian variety has a {\em fake torsion point}.

The solution that we propose is to also search for torsion points defined over number fields of small degree to account for the nonsurjectivity of the reduction map.
Later in this paper, we will introduce an algebraic and an analytic method to find such nonrational torsion points.
The algebraic method, Algorithm~\ref{algo:algebraic}, constructs nonrational torsion points from their reductions modulo different medium sized auxiliary primes, without assuming anything about the field of definition of the torsion points, using a technique that we call algebraic reconstruction (see Subsection~\ref{subsect:algrec}).
The analytic method, Algorithm~\ref{algo:analytic}, numerically inverts the Abel-Jacobi map to find new torsion points.
Altogether, this leads to the following algorithm to provably compute the torsion subgroup.

\begin{algorithm}\label{algo:main}
{\em Input: a curve $C$ defined over $\Q$.

\vspace{-0.5cm} Output: the rational torsion subgroup of $J = \mathrm{Jac}(C)$.}

\vspace{-0.5cm}\begin{itemize}[labelwidth=\widthof{{\bf Step~2.}},leftmargin=!]
\item[{\bf Step~1.}] Find a set of small primes~$p_1, \ldots, p_s$ satisfying the conditions of Lemma~\ref{lemma:reduction_inj}. For each~$p_i$ compute~$N_i \coloneqq |J_{p_i}(\F_{p_i})|$ (see {\bf Step~2} and {\bf Step~3} of Algorithm~\ref{algo:algebraic}), where~$J_{p_i}$ is the reduction of~$J$ modulo~$p_i$.
\item[{\bf Step~2.}] For each divisor~$\ell \mid \mathrm{gcd}(\{N_i\})$, start with~$T_\ell = \{0_J\}$, and keep applying {\bf Step~3} until the condition in {\bf Step~4} is satisfied.
\item[{\bf Step~3.}] Use Algorithms~\ref{algo:algebraic} and~\ref{algo:analytic} with input~$K = T_\ell \cap J(\Q)$ to add new possibly nonrational torsion points to~$T_\ell$.
\item[{\bf Step~4.}]
Check if all~$\ell$-power torsion can be explained by~$T_\ell$, i.e., if for each~$\ell$-power torsion point~$P \in T_\ell \cap J(\Q)$ there exists a prime~$p_i$ such that every~$Q_{p_i} \in J_{p_i}(\F_{p_i})$ with~$\ell \cdot Q_{p_i} = P \bmod p_i$ is the reduction of some element of~$T_\ell$ at some prime lying above~$p_i$ whose ramification index is at most~$p_i-2$.
\item[{\bf Step~5.}] When the conditions in~{\bf Step~4} are satisfied for each~$\ell$, output the group~$T$ generated by the rational torsion points of all the~$T_\ell$.
\end{itemize}
\end{algorithm}

\begin{theorem}
The output of Algorithm~\ref{algo:main} is correct.
\end{theorem}

\begin{proof}
Suppose there exists a torsion point~$Q \in J(\Q)$ that is not contained in~$T$.
Without loss of generality, we may assume that the order of~$Q$ is a power of a prime~$\ell$.
By Lemma~\ref{lemma:reduction_inj},~$\ell$ must divide all~$N_i$ and hence~$\mathrm{gcd}(\{N_i\})$.
Without loss of generality, we may assume that~$P \coloneqq \ell \cdot Q$ lies in~$T_\ell \cap J(\Q)$.
Now, by the condition in~{\bf Step 4}, there is another nonrational torsion point~$Q'$ in~$T_\ell$ whose reduction modulo a prime lying above~$p_i$ equals that of~$Q$.
This contradicts the injectivety of the reduction map, as shown in Lemma~\ref{lemma:reduction_inj}.
\end{proof}

{\bf Outline.} In Section \ref{sect:prelims}, we discuss the background needed for this approach: the Weil pairing, Weil polynomials, the Newton-Raphson method, and different ways to do Jacobian arithmetic. In Section \ref{sect:faketorsion}, we study the phenomenon of fake torsion points. The core section of this paper in which we explain our actual methods to find torsion points over number fields is Section \ref{sect:methods}. In the final Section \ref{sect:results}, we talk about the computation of the torsion groups in a data set consisting of 82240 plane quartic curves, \cite{Database3}. The implementation of our method in \texttt{Magma} (\cite{Magma}) for plane quartics can be found at \cite{code}.

{\bf Notation.} Throughout this text, $C$ denotes a smooth projective plane quartic curve over~$\Q$, the symbol~$p$ denotes a prime number, $J$ is the Jacobian of~$C$, and~$C_p$ and~$J_p$ are the reduction of~$C$ and~$J$, respectively, modulo~$p$, when~$p$ is a prime of good reduction. 

{\bf Acknowledgements.} The author has been supported by \href{https://simonscollab.icerm.brown.edu/}{the Simons Collaboration} on Arithmetic Geometry, Number Theory, and Computation (Simons Foundation grant 550033) and by C\'eline Maistret's Royal Society Dorothy Hodgkin Fellowship.

The author wishes to thank Edgar Costa, Maarten Derickx, Bjorn Poonen, David Roe, and Andrew Sutherland for useful discussions that helped to improve the method and article, and for their help running the parallel computation on the servers of the Simons Collaboration at the Massachusetts Institute of Technology. The author also wishes to thank the anonymous referees for the valuable suggestions which also led to improvements of the article.

\section{Preliminaries}\label{sect:prelims}

\subsection{Weil pairing}

We will recall the definition and properties of the Weil pairing. For more background we refer the reader to \cite[Sect.~III.8]{SilvermanBook} or \cite[Chap.~8]{MumfordAV}.

For abelian varieties~$A$ over a field~$K$, the {\em Weil pairing} is a bilinear pairing
$$w \colon A(\overline{K})[n] \times A^{\vee}(\overline{K})[n] \to \mu_n\!\left(\overline{K}\right),$$
wher~$n$ is an integer not divisible by the characteristic of~$K$, and where~$\overline{K}$ is an algebraic closure of~$K$.
In particular, in the case of a Jacobian~$J$ of a curve, when the theta divisor on~$J$ induces a principal polarisation~$J \to J^{\vee}$, we get a pairing $$w \colon J(\overline{K})[n] \times J(\overline{K})[n] \to \mu_n\!\left(\overline{K}\right).$$
This pairing, which we will also call the Weil pairing, is symplectic, i.e., alternating and nondegenerate.

Now consider the case where~$K$ is a number field and where~$\mathfrak{p}$ is a prime of residue characteristic~$p \nmid n$, together with~$J$ satisfying the conditions of Lemma \ref{lemma:reduction_inj}. Then the Weil pairing is compatible with the reduction map modulo~$\mathfrak{p}$, i.e., the following diagram is commutative.

\[
\xymatrix{
J(\overline{K})[n] \times J(\overline{K})[n] \ar[r] \ar[d]	&\mu_n\!\left(\overline{K}\right) \ar[d]	\\
J_{\mathfrak{p}}(\overline{\F}_p)[n] \times J_{\mathfrak{p}}(\overline{\F}_p)[n] \ar[r]	&\mu_n\!\left(\overline{\F}_p\right)
}
\]

Moreover, the action of the absolute Galois group~$G_K \coloneqq \mathrm{Gal}\!\left(\overline{K}/K\right)$ respects the Weil pairing, i.e., $$\sigma(w(x,y)) = w(\sigma(x), \sigma(y))$$ for all~$x,y \in J\!\left(\overline{K}\right)\![n]$ and all~$\sigma \in G_K$. In particular the action of~$G_K$ must factor through the general symplectic group~$\Sp(J\!\left(\overline{K}\right)\![n], w)$ through elements with similitude character the~$n$-th cyclotomic character over~$K$. In the case~$n = \ell$ is prime, the group~$\Sp(J\!\left(\overline{K}\right)\![n], w)$ can be identified with the classical general symplectic group~$\Sp(2g, \F_{\ell})$, where~$g$ is the dimension of~$J$.

\subsection{Weil polynomials}
\label{subsect:Lpoly}

We will recall the definition and properties of the Weil polynomial. For more background, we refer the reader to \cite[Sects.\ III.7 and V.2]{SilvermanBook} and \cite{TatePaper}.

Let~$A$ be an abelian variety over~$\Q$, and let~$p$ be an odd prime of good reduction. Then its reduction~$A_p$ is an abelian variety over~$\F_p$ and for any prime~$\ell \neq p$, we can consider the Tate module~$V_\ell \coloneqq \Q_{\ell} \otimes_{\Z_{\ell}} \lim_{n} A_p[\ell^n](\overline{\F}_p)$. The Frobenius element in~$\mathrm{Gal}(\overline{\F}_p/\F_p)$ acts on this~$\Q_{\ell}$-vector space of dimension~$2 \mathrm{dim}(A)$. Its characteristic polynomial~$P_{A_p}$, the Weil polynomial, has coefficients in~$\Z$, is independent of the choice of~$\ell$, and has the property that~$\# A_p(\F_p) =P_{A_p}(1)$.

When~$J = \mathrm{Jac}(C)$ is the Jacobian of a curve and~$p$ is a prime of good reduction of the curve, the polynomial~$P_{J_p}$ can be computed by computing the characteristic polynomial of Frobenius on the \'etale cohomology group~$H^1_{\mathrm{\acute{e}t}}(C, \Q_{\ell})$. It is feasible to compute~$P_{J_p} \bmod p$ for~$p$ of size about~$10^6$ in several minutes using algorithms described in \cite{CostaPhD}, which generalises point counting algorithm for elliptic curves (see for example~\cite{Schoof1,Schoof}) although not in polynomial time in~$\log{p}$, but rather in time $\mathcal{O}(p)$, see \cite{CostaPhD}. Therefore, we will restrict ourselves to primes of the aforementioned size.

\subsection{Newton-Raphson method and precision}
\label{sect:NewtonRaphson}

For part of our computation, we will use complex valued numerical computations in order to try to find algebraic torsion points in~$J(\overline{\Q})$. For this reason, we will briefly recall the numerical methods that we use, their stability, speed of convergence, and the loss of precision that might occur. More background on the methods can be found in many of the textbooks available on numerical analysis.

The main numerical method that we use is the Newton-Raphson method. The method attempts to find a zero for a holomorphic function~$f \colon \C \to \C$ by starting with some initial guess~$x_0 \in \C$ for the root and iteratively computing the next approximation~$x_{n+1} = x_n - \frac{f(x_n)}{f'(x_n)}$. If~$x_0$ is close enough to some simple root~$r$ of~$f$, then the sequence~$(x_n)$ will converge quadratically to~$r$.

However, when~$r$ is a root of multiplicity at least~2 (or in practice also when~$f$ has two simple roots very close to one another) problems may arise. As~$x_n$ gets closer to~$r$, the denominator~$f'(x_n)$ will get close to~0, requiring us to use a lot more precision to reliably compute the fraction~$\frac{f(x_n)}{f'(x_n)}$. In principle, the method would still converge, but the following example explains why it is inevitable that we lose precision.

\begin{example}
Suppose we are looking for a root of~$x^2 - 2x + 1$ close to the starting point~$x_0 = 1.1$, and we are doing computations with~1000 digits of precision. Then we might end up finding the approximate root~$\widetilde{r} = 1 - 10^{-500}$. As~$\widetilde{r}^2 - 2\widetilde{r} + 1 = 10^{-1000}$, this is a root within the precision of our computation, even though the actual root~$r = 1$ lies at distance~$10^{-500}$. We lost half of our digits of precision.

If we now continue using~$\widetilde{r}$ instead of~$r$ and say that we have to find a root of~$x^2 - 2x + \widetilde{r}$, then we could end up with the approximate root~$1 + 10^{-250}$ instead of the intended solution~$r = 1$. So the loss of digits in the process can accumulate, as the output of a root finding algorithm can only be expected to have precision~$\sim \sqrt[n]{\varepsilon}$ given input with precision~$\sim \varepsilon$ around a root of multiplicity~$n$.
\end{example}

The Newton-Raphson can also be used for multivariate functions~$\C^n \to \C^n$. One has to replace~$f'(x_n)$ by the Jacobian matrix~$J$ of the function evaluated at~$x_n$. Here, problems arise when~$J(r)$ has an eigenvalue of zero, in which case the inverse~$J(x_n)^{-1}$ either does not exist, if $J(x_n)$ also has zero as an eigenvalue, or the computation of this inverse becomes numerically unstable as there is an eigenvalue very close to zero.

\subsection{Jacobian arithmetic}

For hyperelliptic curves, points on the Jacobian are typically represented using Mumford coordinates, \cite{Cantor87}, which gives a unique way to represent each curve. For general curves Khuri-Makdisi, \cite{KM04,KM07,KM18}, developed ways to represent points on the Jacobian and do arithmetic. In \cite{FlonOyonoRitzenthaler} Flon, Oyono, and Ritzenthaler describe a method specifically tailored to nonhyperelliptic genus~3 curves.

Many of these methods have been designed with the goal of implementing fast arithmetic over finite fields, which has potential applications in cryptography. Often these methods make assumptions on the curve that are easy to satisfy over finite fields, but not over number fields. For the purpose of our computation, we used two different methods to representing points on the Jacobian and do arithmetic. Even though these methods are most likely not state of the art in terms of their efficiency, we will describe them to inform the reader about the representations we used.

From now on $C$ is a smooth plane quartic curve over $\Q$ and $J$ is its Jacobian.

\subsubsection{Over exact fields}
\label{subsect:jacexact}

Our goal will be to reconstruct points on the Jacobian from their reductions modulo different primes~$p$. For this reason, we would like represent points on the Jacobian in such a way that it has the following properties.

\begin{definition}\label{def:alpha-beta}
For this representation of points on the Jacobian, we define the two properties $(\alpha)$ and $(\beta)$ as follows:\\[-1.2cm]
\begin{itemize}\itemsep0pt
\item[$(\alpha)$] the representation of each point is unique;
\item[$(\beta)$] for a prime~$p$ of good reduction, and any point~$P \in J(\Q)$, there is a way to reduce the representation of~$P$ modulo~$p$, and for all but finitely many of the primes~$p$ this reduced representation is the unique representation for the reduction~$\overline{P} \in J_p(\F_p)$ of~$P$ modulo~$p$.
\end{itemize}
\end{definition}

We will assume throughout that~$C$ has a~$\Q$-rational point.
This is satisfied by at least 99\% of the curves in \cite{Database3}.
The method would work essentially the same for a~$\Q$-rational divisor of degree~1 (e.g., the curve~$x^4 + y^4 + z^4 - 5xz^3$, which has no points over~$\F_5$ and hence no points over~$\Q$, but does have the~$\Q$-rational divisor~$\sum (\sqrt[4]{-1} : 1 : 0) - \sum (\sqrt[3]{5} : 0 : 1)$, where the sum is taken over the conjugates).
In case~$C$ does not have~$\Q$-rational divisors of odd degree, there will be theoretical problems as not every~$\Q$-rational divisor class can be represented by a~$\Q$-rational divisor, which poses serious challenges.
One would probably want to look at the linear algebra methods as described in \cite{KM04}.
We will not touch this subject any further and from now on assume the existence of a~$\Q$-rational point~$P$ on~$C$.

Let~$D$ be any degree~0 divisor~$D$ on~$C$. Then we know there is an integer~$m \geq~0$ such that~$h^0(D + mP) = 1$. Indeed for~$m = 0$, this space has dimension at most~1 by Riemann-Roch, for~$m \gg 0$, the space becomes higher dimensional, and each time we increase~$m$ the dimension increases at most by~1. If there are multiple~$m$ such that~$h^0(D + mP) = 1$, we take the smallest one. Then looking at the divisor of any nonzero function~$f \in H^0(D + mP)$, we find a way to represent~$D$ as~$E - mP$, where~$E$ is an effective divisor. 

\begin{lemma}
This representation, consisting of the integer~$m \geq 0$ and an effective divisor~$E$ of degree~$m$ has property~$(\alpha)$ (as in Definition~\ref{def:alpha-beta}), i.e., it is unique.
\end{lemma}
\begin{proof}
Indeed, the existence of a different effective divisor~$E'$ with the property that~$E - mP \sim E' - mP \sim D$ would imply that~$h^0(D + mP) \geq 2$.
\end{proof}

\begin{lemma}
This representation has property~$(\beta)$ (as in Definition~\ref{def:alpha-beta}).
\end{lemma}

\begin{proof}\newcommand{\Cc}{\mathcal{C}}
Let $D$ be a divisor of degree~0 represented as~$E - mP$ with~$m \geq 0$ and~$E$ as above. 
Let~$N$ be the product of the primes of bad reduction of~$C$. Then~$C$ has a smooth model~$\Cc$ over~$\Spec(\Z[1/N])$ and we can take the closures~$\overline{D}$ and~$\overline{P}$ inside~$\Cc$ of~$D$ and~$P$, respectively. We consider the sheaves~$\mathcal{F} \coloneqq \mathcal{O}_{\Cc}(\overline{D} + m\overline{P})$ and~$\mathcal{G} \coloneqq \mathcal{O}_{\Cc}(\overline{D} + (m-1)\overline{P})$. 
We have that~$\mathcal{F}(\Cc_{\Q})$ and~$\mathcal{G}(\Cc_{\Q})$ are~$\Q$-vector spaces of dimensions~1 and~0, respectively. In particular, this implies that for all but finitely many~$p$ the~$\F_p$-vector spaces~$\mathcal{F}(\Cc_{\F_p})$ and~$\mathcal{G}(\Cc_{\F_p})$ have dimensions~1 and~0, respectively, see \cite[Theorem III.12.8, p.\ 288]{Hartshorne} for example. For these primes~$p$, the representation of~$D \bmod p$ has the same~$m$ and uses~$E \bmod p$.
\end{proof}

\begin{remark}
Note that the proof above does not provide a way to determine which primes~$p$ actually satisfy the conditions in property~$(\beta)$.
This does not pose a problem in practice for our application as the primes chosen in {\bf Step 1} of Algorithm~\ref{algo:main} are so large, that they (almost) always seem to satisfy these conditions, at least for the curves in our dataset.
We did not make any attempt to resolve this problem theoretically.
\end{remark}

To add two points~$E_1 - m_1P$ and~$E_2 - m_2P$, one can use standard methods to compute a basis for the Riemann-Roch spaces~$H^0(E_1 + E_2 + mP)$, finding the smallest~$m$ such that this space has dimension 1.
Then we compute the divisor of a non-zero element~$f \in H^0(E_1 + E_2 + mP)$.
This divisor is linearly equivalent to 0 and is of the shape~$E_{1,2} - E_1 - E_2 - mP$ where~$E_{1,2}$ is effective.
This gives rise to the representation~$E_{1,2} - (m + m_1 + m_2)P$ for $E_1 + E_2 - (m_1+m_2)P$.
This can also be done directly using Magma's built-in function \verb+DivisorReduction+.

\subsubsection{Over the complex numbers}
\label{sect:complex-arithmetic}

From this point on, we assume~$C$ to have genus~3.

For a Zariski dense subset of divisor classes in~$J(\C)$, the representation described in the previous Subsection is of the shape~$-3P + E$ with~$E$ effective of degree~3. Indeed, by Riemann-Roch $h^0(D + 3P) \geq 1$, so~$m \leq 3$, while at the same time the space of divisor classes of the shape~$-mP + E$ with~$0 \leq m \leq 2$ and~$E$ effective of degree~$m$ has dimension at most~2 inside the 3-dimensional Jacobian.

Our (potential) torsion points, being special points on the Jacobian, quite regularly have a representation with~$m < 3$. When we are doing numerical computations on~$J(\C)$, this often causes numerical instability for our algorithms. Luckily, there is an abundance of points on~$J(\C)$ and therefore we can use the following alternative presentation for elements of~$J(\C)$.

We represented them as~$Q_1 + Q_2 + Q_3 - P_1 - P_2 - P_3$, where~$P_1,P_2,P_3 \in C(\C)$ are three arbitrary points that are chosen in advance and~$Q_1,Q_2,Q_3 \in C(\C)$ is a triple of points depending on the divisor class. We will now show why we have a practical guarantee that the set~$\{Q_1, Q_2, Q_3\}$ will be unique for any divisor class that we encounter in our computation.

\begin{proposition}\label{prop:uniqrep}
Let~$D \in J(\C)$ be any divisor class. Then for generic~$P_1, P_2, P_3 \in C(\C)$, the class~$D$ has a unique representation~$Q_1 + Q_2 + Q_3 - P_1 - P_2 - P_3$, where two representations are called the same if the~$Q_i$ are the same up to reordering.
\end{proposition}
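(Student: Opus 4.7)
The plan is to reformulate the proposition in terms of the Abel--Jacobi map
\[
\phi_3 \colon \mathrm{Sym}^3 C \longrightarrow \mathrm{Pic}^3(C),\qquad (Q_1,Q_2,Q_3)\longmapsto [Q_1+Q_2+Q_3].
\]
Fix a divisor class $D \in J(\C) = \mathrm{Pic}^0(C)$. A representation $D = Q_1+Q_2+Q_3-P_1-P_2-P_3$ is exactly a preimage of the class $[E] := D + [P_1+P_2+P_3] \in \mathrm{Pic}^3(C)$ under $\phi_3$, and two representations give the same unordered triple $\{Q_1,Q_2,Q_3\}$ precisely when they yield the same effective divisor. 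So I need to show that for a general choice of $P_1,P_2,P_3$, the fibre $\phi_3^{-1}([E])$ consists of a single effective divisor, i.e.\ $h^0(E)=1$.

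Since $C$ is a plane quartic, $g=3$ and the canonical class $K$ has degree $4$. Riemann--Roch on a degree-$3$ divisor $E$ gives $h^0(E) = 1 + h^0(K-E)$, and $K-E$ has degree $1$, so $h^0(K-E) \in \{0,1\}$ (otherwise $C$ would carry a $g^1_1$, contradicting $g=3$). Hence $h^0(E)\ge 2$ if and only if $K-E$ is linearly equivalent to an effective divisor of degree $1$, that is, if and only if $[E]$ lies in the image of the morphism
\[
\psi \colon C \longrightarrow \mathrm{Pic}^3(C),\qquad Q \longmapsto [K - Q],
\]
which is injective (being a translate of the Abel--Jacobi embedding). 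Thus the ``bad locus'' $W^1_3 := \{[E]\in\mathrm{Pic}^3(C) : h^0(E)\ge 2\} = \psi(C)$ is a $1$-dimensional closed subvariety of the $3$-dimensional variety $\mathrm{Pic}^3(C)$.

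For a fixed $D$, the set of bad triples is
\[
B_D := \phi_3^{-1}\bigl(W^1_3 - D\bigr) \subset \mathrm{Sym}^3 C,
\]
where $W^1_3-D$ denotes the translate by $-D$ inside $\mathrm{Pic}^3(C)$. Over a point $[E'] \notin W^1_3$ the fibre of $\phi_3$ is a single reduced point, and even over $W^1_3$ itself the fibres are $\P^1$'s, so $\phi_3$ has fibres of dimension at most $1$ everywhere. Combined with $\dim(W^1_3-D) = 1$, this gives $\dim B_D \le 2 < 3 = \dim \mathrm{Sym}^3 C$. Hence $B_D$ is a proper Zariski-closed subset, and for any $(P_1,P_2,P_3)$ in its complement (a dense open) the class $[E] = D + [P_1+P_2+P_3]$ satisfies $h^0(E) = 1$, yielding a unique effective representative $Q_1+Q_2+Q_3$ and thus a unique representation of $D$ up to permutation of the $Q_i$.

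The only subtle point is the dimension count for $B_D$: one must note that although $\phi_3$ can fail to be injective over $W^1_3$, the generic $1$-dimensional fibre dimension over the source of $W^1_3-D$ still keeps $\dim B_D \le 2$. Everything else is a direct application of Riemann--Roch and the Abel--Jacobi theorem in genus $3$.
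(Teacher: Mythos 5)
Your proof is correct but takes a genuinely different route from the paper's. The paper's argument exhibits a single good triple by choosing $Q$ a non-Weierstra\ss\ point and using Jacobi inversion to write $D + P_1 + P_2 + P_3 \sim 3Q$ (so that $h^0 = 1$), then appeals to upper semicontinuity of $h^0$ in $(P_1,P_2,P_3)$ (Hartshorne III.12.8) to conclude that $\{h^0 > 1\}$ is a proper closed subset. You instead characterize the bad locus exactly: via Riemann--Roch for $g=3$, $h^0(E) \geq 2$ happens precisely when $K - E$ is effective of degree $1$, so the bad classes form the Brill--Noether curve $W^1_3 = \{[K-Q] : Q \in C\}$, and the preimage of its translate in $\mathrm{Sym}^3 C$ has dimension at most $2 < 3$. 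The paper's route is shorter and needs only one explicit good point; yours is more structural, pins down the failure locus precisely, and makes visible where genus $3$ enters (namely through $\deg K = 4$, which forces $h^0(K-E) \leq 1$ and identifies $W^1_3$ as a residual copy of the curve). Both correctly establish that the good locus is dense Zariski-open, and your handling of the subtle point — that fibres of the Abel--Jacobi map over $W^1_3$ are $\mathbb{P}^1$'s, so the dimension count still closes — is sound.
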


\begin{proof}
If~$D$ has two such representations, this implies that~$h^0(D+P_1+P_2+P_3) >~1$, as the associated Riemann-Roch space must have two functions with distinct zeros which are therefore linearly independent. The dimension~$h^0(D+P_1+P_2+P_3)$ is upper semicontinuous as a function in~$P_1,P_2,P_3$ by \cite[Theorem III.12.8, p.\ 288]{Hartshorne} applied to the map $f \colon C^4 \to C^3$ projecting away from the first coordinate and the sheaf $\mathcal{F} = \mathop{\pi_1^*} \mathcal{O}(D) \otimes \mathcal{O}(\Delta_1) \otimes \mathcal{O}(\Delta_2) \otimes \mathcal{O}(\Delta_3)$ where $\pi_1 \colon C^4 \to C$ is the projection on the first coordinate, and $\Delta_i = \{(x_0, x_1, x_2, x_3) \in C^4 : x_0 = x_i \}$.
By Riemann-Roch, we know that~$h^0(D+P_1+P_2+P_3) \geq 1$.
If~$D+P_1+P_2+P_3 \sim~3Q$, where~$Q \in C(\C)$ is a non-Weierstra\ss\ point (i.e., a point~$P$ for which~$h^0(mP) = 1$ for all~$0 \leq m \leq g$, see for example~\cite[Chap.~1]{AlgCurves}), then this dimension is equal to~1. In particular, as~$C(\C)^3$ is irreducible, for all but a codimension~1 set of~$(P_1,P_2,P_3) \in C(\C)^3$, the dimension must equal~1 and the representation must be unique.
\end{proof}

\begin{remark}
In fact, it follows that the map
$$\mathop{\mathrm{Sym}^3} C \to J \colon \quad (Q_1, Q_2, Q_3) \mapsto Q_1 + Q_2 + Q_3 - P_1 - P_2 - P_3$$
is smooth and injective outside of a subset of codimension~1. More details and alternative proofs can be found in~\cite[Chap.\ 1]{AlgCurves}.
\end{remark}

The representation also has the following useful property.

\begin{proposition}\label{prop:goodrep}
Let~$D \in J(\C)$ be a nonzero divisor class. Then for generic elements~$P_1,P_2,P_3 \in C(\C)$, the unique representation~$Q_1+Q_2+Q_3 - P_1-P_2-P_3$ for~$D$ has the property that~$\{Q_1,Q_2,Q_3\} \cap \{P_1,P_2,P_3\} = \emptyset$.
\end{proposition}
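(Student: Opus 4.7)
The plan is to show that the failure locus in $C(\C)^3$---where some $Q_i$ coincides with some $P_j$---is contained in a proper closed subset. Suppose the unique representation $D = [Q_1+Q_2+Q_3 - P_1-P_2-P_3]$ has $Q_i = P_j$ for some $i,j$; after relabelling I may take $Q_3 = P_3$, and cancelling gives $D + [P_1]+[P_2] = [Q_1+Q_2]$ in $\Pic^2(C)$. In other words, $D+[P_1]+[P_2]$ lies in $W_2 \subset \Pic^2(C)$, the image of the Abel--Jacobi map $\mu \colon C^{(2)} \to \Pic^2(C)$, equivalently $h^0(D+P_1+P_2) \geq 1$. Running this reduction over the three pairs that could witness a coincidence (and noting that any multi-coincidence still yields at least one single-pair cancellation), the failure locus is contained in $Z_{12} \cup Z_{13} \cup Z_{23}$, where $Z_{ab} = \{(P_1,P_2,P_3) \in C(\C)^3 : h^0(D+P_a+P_b) \geq 1\}$. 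It therefore suffices to show each $Z_{ab}$ is a proper closed subset.

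Closedness is immediate from upper semicontinuity of $h^0$, exactly as in the previous proposition. For properness of $Z_{12}$, I would observe that after projecting out the third coordinate it equals $\mu^{-1}(W_2 \cap (W_2 - D))$; since $\mu$ surjects onto $W_2$, this preimage is all of $C(\C)^2$ if and only if $W_2 \subseteq W_2 - D$, and since both are irreducible of the same dimension, if and only if $W_2 = W_2 - D$, i.e.\ $D$ lies in the stabiliser of $W_2$ in $J = \Pic^0(C)$.

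Since $g = 3$, the surface $W_2 = W_{g-1}$ is a translate of the theta divisor of $J$, which induces the canonical principal polarisation; because the kernel of a principal polarisation is trivial, so is the stabiliser of $W_2$ in $J$. Since $D \neq 0$, this forces $W_2 \neq W_2 - D$ and hence $Z_{12}$ (and, symmetrically, $Z_{13}, Z_{23}$) is proper closed, so the disjointness property holds on a dense open subset of $C(\C)^3$. The only genuine input in the argument is the triviality of the stabiliser of the theta divisor, which I would cite from a standard source on abelian varieties rather than reprove; the rest is bookkeeping about the three possible pairs.
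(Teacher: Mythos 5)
Your proof is correct, but it takes a genuinely different route from the paper. Both arguments begin identically: reduce to showing $h^0(D+P_a+P_b)=0$ for a general pair, and observe that the failure locus is closed by upper semicontinuity. The divergence is in how one rules out the degenerate case where $h^0(D+P_1+P_2)\geq 1$ for \emph{every} pair. The paper handles this combinatorially and self-containedly: it fixes a general distinct triple $P_1,P_2,P_3$, applies the hypothesis to each of the three pairs to produce three representations of $D$ each containing one of the $P_c$, then invokes the previously established uniqueness of the representation to force all three to coincide with $\{P_1,P_2,P_3\}$, giving $D=0$. You instead interpret the locus geometrically as the preimage under the Abel--Jacobi map of $W_2\cap(W_2-D)$, use irreducibility and dimension to conclude $W_2=W_2-D$, identify $W_2$ with a translate of the theta divisor because $g=3$, and cite the triviality of the stabiliser of a theta divisor defining a principal polarisation. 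Your route imports standard abelian-variety theory (kernel of a polarisation, Riemann's theorem that $W_{g-1}$ is a theta divisor) but makes transparent the geometric reason $D\neq 0$ cannot fail; the paper's route uses only the Riemann--Roch semicontinuity already invoked and the uniqueness proposition, so it is more elementary and does not depend on $g=3$ being special beyond the ambient setup. Both are valid. One minor notational point in yours: $\mu$ is defined on the symmetric power $C^{(2)}$, so strictly the preimage you want is of the composite $C(\C)^2\to C^{(2)}\to\Pic^2(C)$, but this does not affect the argument.
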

\begin{proof}
It suffices to show that for generic~$P_1,P_2 \in C(\C)$, the class~$D$ is not equivalent to~$Q_1+Q_2-P_1-P_2$ for any~$Q_1,Q_2 \in C(\C)$. Equivalently, we need to show that~$h^0(D+P_1+P_2) = 0$ generically, as the existence of~$Q_1$ and~$Q_2$ with~$Q_1+Q_2-P_1-P_2-D \sim 0$ is equivalent to the existence of a function~$f$ with~$\mathrm{div}(f) + D + P_1 + P_2$ effective. Suppose that this is not the case, then this dimension must be at least~1 for any choice of~$P_1$ and~$P_2$ by the semicontinuity in \cite[Theorem III.12.8, p.\ 288]{Hartshorne}. In particular, for any distinct~$P_1,P_2,P_3 \in C(\C)$, we now have three ways of representing~$D$: 
$$Q_1+Q_2+P_3 - \sum_i P_i, \qquad R_1+P_2+R_3 - \sum_i P_i, \quad \textrm{and} \quad P_1+S_2+S_3 - \sum_i P_i,$$
for certain~$Q_1,Q_2,R_1,R_3,S_2,S_3 \in C(\C)$. By the uniqueness of the representation, which we may assume for generic~$P_1, P_2, P_3$ by Proposition~\ref{prop:uniqrep}, we now must have that~$\{Q_1,Q_2,P_3\} = \{R_1,P_2,R_3\} = \{P_1,S_2,S_3\} = \{P_1,P_2,P_3\}$. In particular,~$D = 0$, which is a contradiction.
\end{proof}

\begin{corollary}
Let~$D_1, \ldots, D_n \in J(\C)$ be nonzero divisor classes.
Then for all but at most a codimension~1 subset of $(P_1, P_2, P_3) \in C(\C)^3$, the representations $Q_{i,1} + Q_{i,2} + Q_{i,3} - P_1 - P_2 - P_3$ of~$D_i$ satisfy the conditions in Propositions~\ref{prop:uniqrep} and~\ref{prop:goodrep} for all~$i = 1, \ldots, n$.
\end{corollary}

We will from now on assume that all divisor classes that we encounter in the computation of the torsion subgroup satisfy the conditions in Propositions~\ref{prop:uniqrep} and~\ref{prop:goodrep}.

To add two points, we use the following algorithm, which is a modified version of the algorithm in \cite{FlonOyonoRitzenthaler}.

\begin{algorithm}\label{algo:addC}
{\em Input: two triples of points~$Q_1,Q_2,Q_3$ and~$R_1,R_2,R_3$ representing points~$Q = \sum_i Q_i - \sum_i P_i$ and~$R = \sum_i R_i - \sum_i P_i$ on~$J(\C)$.

\vspace{-0.5cm} Output: a triple of points~$S_1,S_2,S_3$ representing the point~$Q+R = \sum_i S_i - \sum_i P_i$.}

\vspace{-0.5cm}\begin{itemize}[labelwidth=\widthof{{\bf Step~2.}},leftmargin=!]
\item[{\bf Step~1.}] Pick (another) random point~$B \in C(\C)$.
\item[{\bf Step~2.}] Find the line~$\ell$ through~$P_1$ and~$P_2$, and compute the residual intersection~$A$ of this line with~$C$, i.e., $A$ is an effective divisor of degree~2 such that~$C$ intersects~$\ell$ in~$P_1+P_2+A$.
\item[{\bf Step~3.}] Find the cubic~$c$ through~$Q_1,Q_2,Q_3,R_1,R_2,R_3,A,$ and~$B$, and compute the residual intersection~$E$ of this cubic with~$C$, i.e., $E$ is an effective divisor of degree~3 such that~$C$ intersects~$c$ in~$\sum_i Q_i + \sum_i R_i +A+B+E$.
\item[{\bf Step~4.}] Find the conic~$n$ through~$B,P_3,$ and~$E$ and compute the residual intersection~$S$ of this conic with~$C$, i.e, $S$ is an effective divisor of degree~3 such that~$C$ intersects~$n$ in~$B+P_3+E+S$.
\item[{\bf Step~5.}] Output the three points~$S_1, S_2,$ and~$S_3$ of which~$S$ consists.
\end{itemize}
\end{algorithm}

\begin{proposition}\label{prop:corr-algo-1}
The output of Algorithm \ref{algo:addC} is correct.
\end{proposition}

\begin{proof}
Consider the rational function~$\frac{c}{\ell n}$. By construction, its associated principal divisor is
\begin{align*}
\left(\frac{c}{\ell n}\right) &=\sum_i Q_i + \sum_i R_i + A + B + E- P_1 - P_2 - A - B - P_3 - E - S\\
&= \sum_i Q_i + \sum_i R_i - \sum_i P_i - \sum_i S_i.
\end{align*}
In particular, we see that~$\sum_i S_i - \sum_i P_i$ is equivalent to~$\sum_i Q_i + \sum_i R_i - 2\sum_iP_i$.
\end{proof}

\begin{remark}\label{rem:num-stable}
To find the residual intersection of a line/conic/cubic with~$f$ numerically, using the root finding algorithms described in Subsection \ref{sect:NewtonRaphson}, it is beneficial for these residual intersections to not have any double points, and no points in common with the rest of the divisor.
In general, we expect the divisor~$P_1+P_2$ to not contain~$A$ and the divisor~$\sum_i Q_i + \sum_i R_i + A + B$ to have no point in common with~$E$, and~$E$ to not have any double points.
Indeed,~$P_1$ and~$P_2$ were chosen generically, so~$\ell$ should generically not be tangent to~$C$.
Similarly,~$Q_1, Q_2, Q_3$ behave like generic points, because~$P_1, P_2, P_3$ are generic.
Note that~$Q$ could equal~$R$.
So~$c$ is either a cubic going through eight generic points on $C$, or a cubic going through five generic points on $C$, of which three with multiplicity 2.
In both cases, the cubic~$c$ has no geometric reason to go through any of these generic points again, or through any other double point.

Because of this, the computation of~$A$ and~$E$ in {\bf Step~2} and {\bf Step~3} is numerically stable and fast. In {\bf Step~4}, there could be an issue when computing the residual divisor~$S$. The divisor~$S$ could namely contain~$P_3$, but according to Proposition \ref{prop:goodrep}, this only happens in the case~$Q+R = 0$. In all other cases, there is generally no double point and our algorithm to compute~$S$ will be numerically stable and fast.
\end{remark}

Another way that we will use to represent points in~$J(\C)$ is by the means of an element in a complex torus~$\C^3 / \Lambda$. The computation of a period lattice~$\Lambda$ and an Abel-Jacobi map~$\iota \colon J(\C) \to \C^3 / \Lambda$ mapping~$Q_1+Q_2+Q_3-P_1-P_2-P_3$ to a corresponding point in the complex torus has been implemented in \texttt{Magma} by Neurohr, see also \cite{Neurohr}. We will also write~$\iota(Q_1, Q_2, Q_3)$ for~$\iota(Q_1+Q_2+Q_3-P_1-P_2-P_3)$.

In order to go back from a point in~$\C^3 / \Lambda$ to a divisor class, we use the following algorithm to invert the Abel-Jacobi map.

\begin{algorithm}\label{algo:inv-AJ}
{\em Input: an element~$x \in \C^3 / \Lambda$.

\vspace{-0.5cm} Output: a triple of points~$Q_1,Q_2,Q_3 \in C(\C)$ such that~$\iota(Q_1,Q_2,Q_3)$ is close to~$x$.}

\vspace{-0.5cm}\begin{itemize}[labelwidth=\widthof{{\bf Step~2.}},leftmargin=!]
\item[{\bf Step~1.}] Pick some integer~$n$. We found that~$n = 14$ worked well in practice for our examples.
\item[{\bf Step~2.}] After picking some local coordinate for~$C$ that is invertible around the points~$P_1$,~$P_2$, and~$P_3$, the map~$\iota$, locally around~$(P_1, P_2, P_3)$, can be considered as an analytic map~$\C^3 \to \C^3$.
Use Newton-Raphson (see Subsection~\ref{sect:NewtonRaphson}) with starting point~$(P_1, P_2, P_3)$ to numerically approximate a solution to~$\iota(Q_{1,n}, Q_{2,n}, Q_{3,n}) = \tfrac{1}{2^n} \cdot x$.
\item[{\bf Step~3.}] Add~$Q_{1,n} + Q_{2,n} + Q_{3,n} - \sum_i P_i$ to itself using Algorithm \ref{algo:addC}. The output of this addition is an approximate solution to~$\iota(Q_{1,n-1}, Q_{2,n-1}, Q_{3,n-1}) = \tfrac1{2^{n-1}} \cdot x$. We then use Newton-Raphson to increase the precision of this solution $(Q_{1,n-1}, Q_{2,n-1}, Q_{3,n-1})$. Decrease~$n$ by~1 and repeat this step until~$n = 0$.
\item[{\bf Step~4.}] Use Newton-Raphson to refine~$(Q_{1,0}, Q_{2,0}, Q_{3,0})$ to the desired precision and output the triple. 
\end{itemize}
\end{algorithm}

The reason for choosing an~$n$ and dividing by~$2^n$ first, is to make sure that the starting point~$(P_1, P_2, P_3)$ in {\bf Step~2} is close enough to the solution for the Newton-Raphson method to actually converge. Because of the assumptions made and by Remark~\ref{rem:num-stable}, the addition in {\bf Step~3} is expected to be numerically stable and to only introduce a moderate error.
This means that the Newton-Raphson method in {\bf Step~3} and {\bf Step~4} can reasonably be expected to converge again.

\subsubsection{Changing base points}

In Subsection~\ref{subsect:jacexact}, we worked with divisors represented as~$E - 3P$ with~$E$ effective of degree~3, while in Subsection~\ref{sect:complex-arithmetic}, we represented the same divisors as~$Q_1 + Q_2 + Q_3 - P_1 - P_2 - P_3$. The following describes an algorithm, over~$\C$ to convert the second representation back into the first representation. It is a modified version of Algorithm~\ref{algo:addC}.

\begin{algorithm}\label{algo:convert}
{\em Input: a triple of points~$Q_1,Q_2,Q_3$ representing a point~$Q$ on~$J(\C)$ as~$\sum_i Q_i - \sum_i P_i$.

\vspace{-0.5cm} Output: a triple of points~$S_1,S_2,S_3$ representing the same point~$Q$ as~$\sum_i S_i - 3P$.}

\vspace{-0.5cm}\begin{itemize}[labelwidth=\widthof{{\bf Step~2.}},leftmargin=!]
\item[{\bf Step~1.}] Pick (another) random point~$B \in C(\C)$.
\item[{\bf Step~2.}] Find the line~$\ell$ through~$P_1$ and~$P_2$, and compute the residual intersection~$A$ of this line with~$C$, i.e., $A$ is an effective divisor of degree~2 such that~$C$ intersects~$\ell$ in~$P_1+P_2+A$.
\item[{\bf Step~3.}] Find the cubic~$c$ through~$Q_1,Q_2,Q_3,A,$ and $B$ with multiplicity~1, and through $P$ with multiplicity~3. Compute the residual intersection~$E$ of this cubic with~$C$, i.e., $E$ is an effective divisor of degree~3 such that~$C$ intersects~$c$ in~$\sum_i Q_i + 3P +A+B+E$.
\item[{\bf Step~4.}] Find the conic~$n$ through~$B,P_3,$ and~$E$ and compute the residual intersection~$S$ of this conic with~$C$, i.e, $S$ is an effective divisor of degree~3 such that~$C$ intersects~$n$ in~$B+P_3+E+S$.
\item[{\bf Step~5.}] Output the three points~$S_1, S_2,$ and~$S_3$ of which~$S$ consists.
\end{itemize}

\begin{proposition}
The output of Algorithm \ref{algo:convert} is correct.
\end{proposition}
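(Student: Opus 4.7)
The plan is a direct verification via linear equivalences on the smooth plane quartic $C$, using Bézout's theorem to track the degrees of residual intersections at each step. Write $H$ for the hyperplane (line section) class on $C$, which has degree~$4$. For a generic random point~$B$, the cubic~$c$ in Step~3 passes through $3+3+2+1=9$ prescribed points, so it exists and is generically unique (since the linear system of plane cubics is $9$-dimensional), and similarly the conic~$n$ in Step~4 passes through $1+1+3=5$ points and is generically unique. All residual divisors have the claimed degrees by Bézout: $4-2=2$ for $A$, $3\cdot 4 - 9 = 3$ for $E$, and $2\cdot 4 - 5 = 3$ for~$S$.

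Next I would chain the three linear equivalences. Since $\ell\cdot C = P_1+P_2+A$, we have $A \sim H - P_1 - P_2$. Since $c\cdot C = \sum_i Q_i + \sum_i R_i + A + B + E$, we get
\[
E \;\sim\; 3H - \textstyle\sum_i Q_i - \sum_i R_i - A - B \;\sim\; 2H - \textstyle\sum_i Q_i - \sum_i R_i + P_1 + P_2 - B.
\]
Finally, $n\cdot C = B + P_3 + E + S$ gives
\[
S \;\sim\; 2H - B - P_3 - E \;\sim\; \textstyle\sum_i Q_i + \textstyle\sum_i R_i - P_1 - P_2 - P_3,
\]
so $S - \sum_i P_i \sim \sum_i Q_i + \sum_i R_i - 2\sum_i P_i = Q + R$ in $J(\C)$, which is exactly the desired equality of divisor classes.

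The remaining issue is to argue that the output is a \emph{valid} representation in the sense of the previous subsection, i.e.\ that $S$ consists of three points disjoint from $\{P_1,P_2,P_3\}$ and that this triple is the unique representative of $Q+R$. Both follow from Proposition~\ref{prop:goodrep} applied to $Q+R$, together with the genericity of the auxiliary choice of~$B$: for $Q+R\ne 0$ the unique reduced representative exists, and the algorithm must reproduce it by the linear equivalence computed above. If $Q+R=0$, the cubic~$c$ becomes degenerate (it contains the quartic as a component or the conic~$n$ drops rank), and one handles this as a separate degenerate case in the implementation.

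The main potential obstacle is verifying that the residual intersections are as claimed at each step, rather than being off by excess intersection multiplicities (e.g.\ if $B$ happened to lie on~$\ell$, or if the cubic~$c$ were tangent to~$C$ at one of the prescribed points). Because $B$ is chosen randomly and the prescribed nine points are in sufficiently general position by Proposition~\ref{prop:goodrep}, these degeneracies occur only on a proper Zariski-closed locus; on the complement, Bézout gives transverse residual divisors of the asserted degrees and the linear-equivalence chain above delivers the result.
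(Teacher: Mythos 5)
Your proof is correct and is essentially the paper's proof: chaining $A\sim H-P_1-P_2$, $E\sim 3H-\sum_iQ_i-\sum_iR_i-A-B$, and $S\sim 2H-B-P_3-E$ is just an unpacked form of the paper's one-line computation of the principal divisor of $c/(\ell n)$, in which the hyperplane classes cancel. The additional remarks on Bézout degrees, uniqueness of the representation, and degeneracy of the $Q+R=0$ case are accurate but belong to the surrounding propositions and remark rather than to this proof.
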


\begin{proof}
The proof is similar to that of Proposition~\ref{prop:corr-algo-1}.
Consider the rational function~$\frac{c}{\ell n}$. By construction, its associated principal divisor is
\begin{align*}
\left(\frac{c}{\ell n}\right) &=\sum_i Q_i + 3P + A + B + E- P_1 - P_2 - A - B - P_3 - E - S\\
&= \sum_i Q_i + 3P - \sum_i P_i - \sum_i S_i.
\end{align*}
In particular, we see that~$\sum_i Q_i - \sum_i P_i$ is equivalent to~$\sum_i S_i - 3P$.
\end{proof}

\end{algorithm}

\begin{remark}
What is said in Remark~\ref{rem:num-stable} still holds for {\bf Step~1} and {\bf Step~2}.
The cubic in {\bf Step~3} does have a triple zero at~$P$, but that is not a problem, because we already know what~$P$ is exactly, and we can use derivative functions to check if~$c$ actually goes through~$P$ four (or more) times.
The same can be applied in {\bf Step~4} in case one or more of the~$S_i$ equals~$P$.
Note that the points in~$B + P_3 + E$ behave like generic points, and are generically not equal to one of the~$S_i$.
However, it could still happen that two of the~$S_i$ in {\bf Step~4} are equal to each other, but not equal to~$P$.
This is the only case in which I observed some significant precision loss occurring in the method, with the number of correct digits decreasing by a factor of~3 in the worst case.
\end{remark}

\section{Fake torsion points}\label{sect:faketorsion}

Let~$P \in J(\Q)$ be a point and let~$\ell$ be a prime number. We define $$D_{\ell}(P) = \{ Q \in J(\overline{\Q}) \mid \ell \cdot Q = P \} \quad \textrm{and} \quad D_{\ell,p}(P) = \{Q \in J_p(\overline{\F}_p) \mid \ell \cdot Q = \overline{P}\},$$ for every odd prime~$p \neq \ell$ of good reduction. This is a torsor under the action of~$J[\ell](\overline{\Q})$ or~$J_p[\ell](\overline{\F}_p)$, respectively. We already saw in the introduction that it could happen that the set of~$\Q$-rational points in~$D_{\ell}(P)$ is smaller than any of the sets of~$\F_p$-rational points in~$D_{\ell,p}(P)$. In case this happen, we say that~$P$ has a {\em fake~$\ell$-divisor}.

In order to understand this phenomenon better, one considers the action of the absolute Galois group~$\G$ on~$D_{\ell}(P)$.
Pick a not necessarily rational~$Q_0 \in D_{\ell}(P)$ as base.
Then for any~$g \in G$ the action of~$g$ on~$D_{\ell}$ can be described by
$$Q_0 + x \mapsto g(Q_0) + M \cdot x,$$
where~$M \in \mathrm{GL}(2g, \F_\ell)$.
Because~$g$ has to respect the symplectic form on~$J[\ell]$, the matrix~$M$ actually lies in~$\mathrm{Sp}(2g, \F_\ell)$, and we get a map~$G \to \Asp{2g}$.
Let~$H$ be the image of this map.
In the case~$P = 0$, we actually have that~$H$ is a subgroup of the smaller group~$\GSp{2g}$ since we can pick~$Q_0 = 0$. (The same can be done if we can find a rational $Q_0 \in D_\ell(P)$, but the most interesting case for our method, is the case when such $Q_0$ does not exist.)

For each odd prime~$p \neq \ell$ of good reduction, there is a conjugacy class~$\Frobp$ of~$H$ which describes how Frobenius acts on~$D_{\ell,p}(P)$. Using these, we can exactly determine for which~$H$ the point~$P$ has a fake~$\ell$-divisor.

\begin{proposition}
The point~$P$ has a fake~$\ell$-divisor if and only if for every element~$h \in H$ we have
$$D_{\ell}(P) \supset \Fix(h) \supsetneq \Fix(H) \coloneqq \{x \in D_{\ell}(P) \mid \forall h \in H : h(x) = x\}.$$
\end{proposition}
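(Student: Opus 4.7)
The plan is to reduce both sides of the equivalence to a statement about the size of the fixed-point set $\Stab(h)$ as $h$ ranges over $H$, and then bridge the local and global data using Chebotarev's density theorem. Note that the first inclusion $D_\ell(P) \supset \Stab(h)$ is automatic from the definition of $\Stab(h)$, so the real content is the strict inclusion $\Stab(h) \supsetneq \Stab(H)$.

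First I would set up the identifications. For any odd prime $p \neq \ell$ of good reduction, Lemma \ref{lemma:reduction_inj} applied over a finite extension of $\Q$ trivialising the Galois action on $D_\ell(P)$ implies that reduction gives a Galois-equivariant bijection $D_\ell(P) \to D_{\ell,p}(P)$ of $J[\ell]$-torsors: it is injective because it is already injective on $J[\ell]$ and $D_\ell(P)$ is a coset of $J[\ell]$, and both sides have the same cardinality $\ell^{2g}$. Under this identification, the $\Q$-rational points of $D_\ell(P)$ correspond to $\Stab(H)$, while the $\F_p$-rational points of $D_{\ell,p}(P)$ correspond to $\Stab(\Frobp)$ for any representative of the Frobenius class in $H$. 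Observe moreover that $|\Stab(h)|$ depends only on the conjugacy class of $h$ in $H$, since conjugation by $g$ takes $\Stab(h)$ bijectively onto $\Stab(ghg^{-1})$.

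For the forward direction, assume $P$ has a fake $\ell$-divisor, so $|\Stab(\Frobp)| > |\Stab(H)|$ for every odd prime $p \neq \ell$ of good reduction. Given any $h \in H$, Chebotarev's density theorem produces such a prime $p$ with $\Frobp$ conjugate to $h$; together with the conjugation-invariance of $|\Stab(\cdot)|$ and the trivial inclusion $\Stab(H) \subseteq \Stab(h)$, this yields $\Stab(h) \supsetneq \Stab(H)$. For the converse, if $\Stab(h) \supsetneq \Stab(H)$ for every $h \in H$, then specialising to $h = \Frobp$ for any such prime $p$ gives $|D_{\ell,p}(P)(\F_p)| = |\Stab(\Frobp)| > |\Stab(H)| = |D_\ell(P)(\Q)|$, which is exactly the fake $\ell$-divisor condition.

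The only step requiring genuine care is the reduction bookkeeping: one must confirm that Lemma \ref{lemma:reduction_inj} really does produce a bijection of $J[\ell]$-torsors intertwining the action of geometric Frobenius at $p$ with that of a lift $\Frobp \in H$, rather than merely an injection on the $\ell$-torsion subgroup. Once this is in place, the remainder of the argument is a mechanical translation via Chebotarev and the conjugation-invariance of stabilizer sizes.
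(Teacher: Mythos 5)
Your proof is correct and follows essentially the same line as the paper's: identify the $\Q$-rational points with $\Stab(H)$, identify the $\F_p$-rational reductions with $\Stab(\Frobp)$ via the fact that reduction is a Galois-equivariant bijection on $D_\ell(P)$, and invoke Chebotarev to see that every conjugacy class of $H$ arises as some $\Frobp$. You supply somewhat more detail than the paper (in particular the explicit torsor-bijection argument and the conjugation-invariance of $|\Stab(h)|$), but the underlying argument is identical.
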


\begin{proof}
The set~$\Fix(H)$ is exactly the set of~$\Q$-rational points in~$D_{\ell}$(P). For each odd prime~$p \neq \ell$ of good reduction, the set of points in~$D_{\ell}(P)$ reducing to an~$\F_p$-rational point in~$D_{\ell,p}(P)$ is exactly~$\Fix(h)$ for some~$h \in \Frobp$. Because of the Chebotarev density theorem, every conjugacy class will occur as~$\Frobp$ for some odd prime~$p \neq \ell$, which concludes the proof of the proposition.
\end{proof}

Looking at the group~$H$, one can not only determine whether there is a fake torsion point, but also the degrees of the actual torsion points. By enumerating all the appropriate subgroups of~$\Asp{2g}$, we get the following result that shows that in certain cases the nonexistence of rational~$\ell$-divisors of~$P$ can be explained by points of degree at most~12.

\begin{proposition}
Suppose that $g = 3$ and either~$\ell = 2$, or both~$\ell = 3$ and~$P = 0$. Then there exist points~$Q_1, \ldots, Q_k \in D_{\ell}(P)$ such that~$[\Q(Q_i) : \Q] \leqslant~12$ and a prime number~$p$ with the following properties. If~$P \neq~0$, then~$D_{\ell,p}(P) = \{Q_1 \bmod p, \ldots, Q_k \bmod p\}$. If~$P = 0$, then~$D_{\ell,p}(P) = \langle Q_1 \bmod p, \ldots, Q_k \bmod p \rangle$.
\end{proposition}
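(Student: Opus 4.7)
The plan is to recast the statement as a finite group-theoretic claim and verify it by an exhaustive computer-aided enumeration.

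In each of the three cases, the Galois image $H$ lies in a specific finite ambient group: $\mathrm{GSp}(6, \F_2)$ when $\ell = 2$ and $P = 0$; $\mathrm{GSp}(6, \F_3)$ when $\ell = 3$ and $P = 0$; and a certain subgroup of $\mathrm{AGSp}(6, \F_2)$ (namely the elements that commute with the affine transformations induced by multiplication by every integer $n$ coprime to $\ell$) when $\ell = 2$ and $P \neq 0$. I would enumerate, using \texttt{Magma} or \texttt{GAP}, all conjugacy classes of subgroups of the relevant ambient group.

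For each such $H$, I would compute the partition of $D_{\ell}(P)$, a set of size $\ell^6$, into $H$-orbits. Since the degree $[\Q(Q):\Q]$ of a point $Q \in D_\ell(P)$ equals the size of its $H$-orbit, the points of degree at most $12$ correspond exactly to elements in orbits of size at most $12$. In the $P = 0$ case, I would verify for every $H$ that such small-orbit elements generate $J[\ell]$ as an $\F_\ell$-vector space, and take the $Q_i$ to be a generating subset. In the $P \neq 0$ case, I would verify that every $H$-orbit on $D_\ell(P)$ has size at most $12$, and take the $Q_i$ to be representatives of all orbits. The prime $p$ can then be taken to be any prime of good reduction, coprime to $\ell$, and satisfying $p > e(\mathfrak{p}/p) + 1$; by Lemma~\ref{lemma:reduction_inj}, reduction modulo $p$ gives a bijection from $D_\ell(P)$ to $D_{\ell,p}(P)$, so the covering or generating property transfers.

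The main obstacle is the computational scale: $\mathrm{GSp}(6, \F_3)$ has order roughly $9 \cdot 10^9$, so a na\"ive enumeration of its subgroup lattice is prohibitive, and efficient algorithms for listing conjugacy classes of subgroups (together with symmetry reductions exploiting maximal subgroup chains) are essential. The uniform bound $12$ emerges as the smallest cutoff on orbit sizes that works in the worst case across the enumeration. The restriction to the three listed cases is necessary because, presumably, in the excluded setting $\ell = 3$ with $P \neq 0$ the enumeration would force orbit sizes exceeding any small bound.
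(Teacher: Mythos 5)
Your high-level strategy — reduce to a finite group-theoretic claim about subgroups $H$ of $\mathrm{GSp}(6,\F_\ell)$ or $\mathrm{AGSp}(6,\F_\ell)$ and verify it by exhaustive enumeration, using the identification of $[\Q(Q):\Q]$ with the size of the $H$-orbit of $Q$ — is exactly the route taken in the paper, whose proof is a one-sentence pointer to a \texttt{Magma} script (\texttt{extra/subgroups.m}) that ``enumerat[es] all the appropriate subgroups of $\mathrm{GSp}(6,\F_\ell)$ or $\mathrm{AGSp}(6,\F_\ell)$, and figur[es] out the degrees of the fake torsion points needed.''

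However, the \emph{orbit condition you propose to check is too strong and is simply false} for many admissible $H$, so as written the plan would not succeed. When $\ell = 2$, $P = 0$, and $H = \mathrm{Sp}(6,\F_2)$, the action on $J[2]\setminus\{0\}$ is transitive with a single orbit of size $63$, so the only element in an orbit of size $\le 12$ is $0$, and these do not generate $J[2]$; similarly for $\ell = 3$, $P = 0$, $H = \mathrm{GSp}(6,\F_3)$ the nonzero orbit has size $728$; and for $\ell = 2$, $P \ne 0$, $H = \mathrm{AGSp}(6,\F_2)$ itself acts transitively on $D_2(P)$ with a single orbit of size $64$. In all of these cases your proposed check fails, yet the proposition is still meant to hold. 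The missing idea is that the prime $p$ (equivalently, the conjugacy class $h = \mathrm{Frob}_p \in H$) is \emph{not} irrelevant: one must verify, for each $H$, the existence of a single element $h \in H$ whose fixed locus $\mathrm{Stab}(h) \subset D_\ell(P)$ is entirely contained in (for $P \neq 0$) or generated by (for $P = 0$) the union of the small $H$-orbits — in the large-$H$ examples above this holds trivially by choosing $h$ with $\mathrm{Stab}(h) = \emptyset$ or $\mathrm{Stab}(h) = \{0\}$. You should replace ``every $H$-orbit has size at most $12$'' (resp. ``small-orbit elements generate $J[\ell]$'') with this quantified-over-$h$ condition, and correspondingly replace ``$p$ can be taken to be any prime of good reduction'' with ``$p$ must be chosen so that $\mathrm{Frob}_p$ lands in the conjugacy class of the witnessing $h$, which exists by Chebotarev.'' (Alternatively, if one reads the proposition as implicitly restricted to the case where $P$ has a fake $\ell$-divisor, the class of admissible $H$ is smaller and the $h$-quantifier might collapse, but that restriction would then need to be stated and used in the enumeration.)
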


\begin{proof}
This is a big group theoretic computation, enumerating all the appropriate subgroups of~$\GSp{6}$ or~$\Asp{6}$, and figuring out the degrees of the fake torsion points needed. The code can be found at \cite[\texttt{extra/subgroups.m}]{code}.
\end{proof}

\section{Methods}\label{sect:methods}

In this section, we explain the main result of this paper: two methods to find torsion points over number fields. For the first method, we use the Chinese remainder theorem, taking torsion points modulo~$p_i$ for different primes~$p_i$ and trying to combine them into one torsion point over a number field. For the second method, we use a complex analytic approach, computing a complex approximation of torsion points up to high enough precision to reconstruct them algebraically. One could also imagine a third method, where one uses Hensel lifting to try to construct torsion points using methods from \cite{Mascot20}, but this approach has not been implemented as of now.

\subsection{Algebraic reconstruction}
\label{subsect:algrec}
\newcommand{\pp}{\mathfrak{p}}

Given a rational number~$\alpha = \frac{r}{s}$ and its residue class modulo~$N$ for some suitable~$N \gg \max(r^2, s^2)$, one could wonder if it is possible to construct~$\alpha$ from this residue class. This question has been answered positively in \cite{RatRec1,RatRec2} with a fast algorithm using the Euclidean algorithm.

In this section, we will consider an algebraic number~$\alpha \in \overline{\Q}$, its associated number field~$K = \Q(\alpha)$ and prime ideals~$\pp_1, \ldots, \pp_k$, such that~$v_{\pp_i}(\alpha) \geq~0$ for~$i = 1, \ldots, k$. Then we can reduce~$\alpha$ modulo each~$\pp_i$ and we get finite field elements~$\alpha_i \in \F_{\pp_i}$. The question one can ask now is: can we reconstruct~$\alpha$ from the~$\alpha_i$? We will describe an algorithm that attempts to do this. Even though it is still practical for our purpose, the algorithm is definitely not as efficient as the rational reconstruction algorithm mentioned before.

For each~$i$, let~$p_i$ be the residue class field characteristic of~$\pp_i$, and let~$f_i \in \Z[x]$ be a lift of the minimum polynomial of~$\alpha_i$ over~$\F_{p_i}$. Then we can consider the ideal~$I_i = (f_i, p_i)$ of~$\Z[x]$. The minimum polynomial~$f$ of~$\alpha$ is an element of~$I_i$ for each~$i$ and hence of the intersection~$I \coloneqq \bigcap_i I_i$. The idea of our approach is to find a small element in~$I$.

\begin{algorithm}\label{algo:main}
{\em Input: prime numbers~$p_i$ and polynomials~$f_i$ as described above.

\vspace{-0.5cm} Output: candidate minimum polynomial~$f$ for~$\alpha$.}

\vspace{-0.5cm}\begin{itemize}[labelwidth=\widthof{{\bf Step~2.}},leftmargin=!]
\item[{\bf Step~1.}] Compute a Gr\"obner basis~$G$ for the ideal~$I = \bigcap_i (f_i, p_i) \subset \Z[x]$.
\item[{\bf Step~2.}] Set~$d \coloneqq~1$, the degree for the candidate polynomial~$f$ that we are currently considering.
\item[{\bf Step~3.}] For each~$g \in G$, compute~$B_g^d \coloneqq \{ x^i \cdot g \mid i \in \Z_{\geq~0} \textrm{ such that } \deg(x^i \cdot g) \leq d\}$. Let~$B^d \coloneqq \bigcup_g B_g^d$ and~$\Lambda^d \subset \R\{x^0, \ldots, x^d\}$ be the lattice generated by~$B^d$.
\item[{\bf Step~4.}] Find a short vector~$f \in \Lambda^d$. Compute~$|f|$, the maximum of the absolute values of the coefficients of~$f$.
\item[{\bf Step~5.}] If~$(2|f|)^{d+1}$ is significantly smaller than~$\mathrm{lcm}(\{p_i\})$ and~$f \not \equiv~0 \bmod p_i$ for any~$i$, then return~$f$, otherwise set~$d \coloneqq d+1$ and return to {\bf Step~3}.
\end{itemize}
\end{algorithm}

For {\bf Step~4} of the algorithm one could use any algorithm to find short vectors. In our implementation we used the LLL algorithm by Lenstra, Lenstra, and Lov\'asz, see \cite{LLL}. In {\bf Step~5}, we do a heuristic check to see if the polynomial~$f$ that we are currently considering is small enough. For this purpose, we compare the number of polynomials of the same degree with coefficients of equal or smaller size with the product of the primes~$p$ over which we have information about~$f \bmod p$. If the latter is much greater than the former, this suggests that the polynomial that we are currently considering might be the correct one.

\begin{example}\label{ex:rat-recon}
Suppose that~$k = 2$, $p_1 = 1009$, $p_2 = 1019$, $f_1 = x-55$ and~$f_2 = x-241$.
Then we find~$G = \{x + 635615, 1028171\}$.
For~$d = 1$, the shortest vector that we can find is~$x-392556$, which is too big to pass the test in {\bf Step~5}.
For~$d = 2$, we find the short vector~$x^2 + 2$, which we will output as~$f$.
\end{example}

\subsection{Finding torsion points: the CRT method}

In this section, we will describe how to find torsion points using the Chinese remainder theorem. We assume that~$\ell$ is prime and that we have some~$\ell$-power torsion point~$Q \in J(\Q)$. Our goal is to find points~$R \neq~0$ such that~$\ell R = Q$. In this section, all points will be represented using the representation described in Subsection \ref{subsect:jacexact}. We will first give an outline of the method.

The idea is to look modulo different primes~$p_i$ for points~$R_{p_i}$ having the property that~$\ell \cdot R_{p_i} = Q \bmod p_i$.
These torsion points combined, give a point modulo the product~$\prod p_i$, and using algebraic reconstruction (see Subsection~\ref{subsect:algrec}), we attempt to find a torsion point~$R$ over a number field that reduces to the~$R_i$.
While every torsion point over a number field would theoretically be found eventually using this method, the sheer number of possible combination of torsion points to consider, might make this impractical.
The algorithm, as described below, also uses several tricks in an attempt to keep this number of possible combinations as small as possible, so that we actually manage to get some non-trivial output in practice.

\begin{algorithm}\label{algo:algebraic}
{\em Input: a prime number~$\ell$, a subgroup~$K$ of known torsion points inside~$J[\ell](\Q)$, and an~$\ell$-power torsion point~$Q \in J(\Q)$.

\vspace{-0.5cm} Output: a (possibly empty) list of nonzero points~$R \in J(\overline{\Q})$ such that~$\ell R = Q$.}

\vspace{-0.5cm}\begin{itemize}[labelwidth=\widthof{{\bf Step~10.}},leftmargin=!]
\item[{\bf Step~1.}] Pick some medium size ($\approx~10^6$) auxiliary prime numbers~$p_1, \ldots, p_k$, such that~$C$ has good reduction at these primes.
\item[{\bf Step~2.}] For each~$p_i$, compute the  Weil polynomial~$P_{p_i}$ modulo~$p_i$ of the reduction~$J_{p_i}$ as described in Subsection \ref{subsect:Lpoly}. Using inequalities for the coefficients of~$P_{p_i}$ found in \cite{Haloui}, construct a finite set~$B$ containing all the possible values of~$N_i \coloneqq \#J_{p_i}(\F_{p_i}) = P_{p_i}(1)$.
\item[{\bf Step~3.}] Take a random point~$S \in J_{p_i}(\F_{p_i})$ and use a baby step giant step approach to identify all~$b \in B$ such that~$b \cdot S = 0$. Discard all other elements of~$B$. Repeat this step until~$\#B = 1$, which must mean that~$B = \{N_i\}$.\setcounter{footnote}{0}\footnote{Even though it never happened in our dataset, theoretically, it could happen that $\# B$ never becomes equal to 1. This problem is studied in more detail in~\cite{Shi}, in which there is a resolution to this problem called Step~3. For our purposes, it would always be possible to change the primes chosen in {\bf Step~1} to avoid this.}
\item[{\bf Step~4.}] For each~$p_i$, decompose~$N_i$ as~$\ell^{e_i} \cdot q_i$, where~$q_i$ has no factors~$\ell$. Then generate a bunch of random points~$S$ in~$J_{p_i}(\F_{p_i})$ and compute~$q_i \cdot S$, which is an element of~$J_{p_i}[\ell^\infty]$. Keep finding new points, until there are enough points to generate the~$\ell$-power torsion of~$J_{p_i}(\F_{p_i})$.
\item[{\bf Step~5.}] For each~$p_i$, find the set~$D_{p_i}$ of points~$R_i \in J_{p_i}(\F_{p_i})$ such that~$\ell R_i = Q \bmod p_i$, and compute the image~$K_{p_i}$ of~$K$ inside~$J_{p_i}(\F_{p_i})$. Discard some of the primes~$p_i$ for which the set~$D_{p_i}$ is relatively large.
\item[{\bf Step~6.}] For each finite set~$I \subset \{1, \ldots, k\}$ for which~$D_I \coloneqq \prod_{i \in I} D_{p_i} / K_{p_i}$ is not too large, enumerate representatives~$(R_i)_{i \in I}$ for all elements of~$D_I$ and execute the next three steps for each such element. After finishing that, continue to Step~10.
\item[{\bf Step~7.}] For each~$i \in I$ and~$V \in K$ compute a representation $$R_i + \overline{V} = m_{i,v} \overline{P} + \sum_{m=1}^{-m_{i,V}} R_{i,V,m}, \qquad \textrm{where} \qquad R_{i,V,m} \in C_p(\overline{\F}_p)$$ as in Subsection \ref{subsect:jacexact}. If the multisets~$\{ m_{i,V} : V \in K \}$ are not all equal for the different~$i \in I$, disregard this element of~$D_I$. Otherwise, compute the polynomials~$P_{x,i} = \prod_{m,R} (T - x(R_{i,V,m}))$ and~$P_{y,i} = \prod_{m,R} (T - y(R_{i,V,m}))$ inside~$\F_{p_i}[T]$.
\item[{\bf Step~8.}] Use algebraic reconstruction, as described in Subsection \ref{subsect:algrec}, to try to lift the matching coefficients of the~$P_{x,i}$ and~$P_{y,i}$ for the different~$i \in I$ to elements of a number field. If the coefficients lift, and we get polynomials~$P_x, P_y \in \overline{\Q}[T]$, apply the next step to them.
\item[{\bf Step~9.}] For all possible combinations of the roots of~$P_x$ and~$P_y$ see which ones give points on~$C(\overline{\Q})$. Then try all combinations of~$m$ of these points to see if we can find an~$R \in J(\overline{\Q})$ such that~$\ell R = Q$. Use the Jacobian arithmetic described in Subsection \ref{subsect:jacexact} to verify this.
\item[{\bf Step~10.}] After finishing the loop described in Step 5, output all~$R$ with~$\ell R = Q$ that we found in Step 9 during the computation.
\end{itemize}
\end{algorithm}

The following example, which uses hyperelliptic curves instead of nonhyperelliptic curves and primes that are a bit smaller, demonstrates the method.

\begin{example}
Suppose~$C$ is the hyperelliptic curve~$y^2 = x^7 + x^5 - 4x^3 - 4x$.
Suppose that~$K = \{0\}$,~$\ell = 2$,~$p_1 = 1009$ and $p_2 = 1019$.
Over~$\F_{p_1}$, we find 15 nonzero~2-torsion points, of which one is~$(55,0) - (0,0)$.
Over~$\F_{p_2}$, we find 15 nonzero~2-torsion points, of which one is~$(241,0) - (0,0)$.
Using algebraic reconstruction on the~$x$-coordinates of these divisors, as in {\bf Step~8}, we quickly realise that these are the reductions of the torsion point~$(\sqrt{-2}, 0) - (0,0)$ on~$\mathrm{Jac}(C)(\Q(\sqrt{-2}))$, cf.~Example~\ref{ex:rat-recon}.

Note that~$\mathrm{Jac}(C)$ actually has 8 rational 2-torsion points, so of the~$15 \cdot 15 = 225$ combinations of~2-torsion points in~{\bf Step~6}, only~$7 \cdot (1 \cdot 1) + 4 \cdot (2 \cdot 2) = 23$ of them are the reductions of a common point.
If we had already found some of these~8 rational~2-torsion points, then the combinatorics in {\bf Step~6} would have been better.
\end{example}

{\bf Steps~1} through~{\bf 4} are precomputation steps that only need to be done once for each curve. In most cases, the CRT method was the faster method to find torsion points over number fields. The biggest bottleneck of the method is the combinatorial explosion that can take place in {\bf Steps~6} through~{\bf 9}; the sets~$D_J$ can become very big in cases where there is a lot of fake torsion.

\begin{remark}
Let us give a very rough heuristic analysis of the method above through a fictive example in order to demonstrate this combinatorial explosion.
Suppose that we found two points~$P_1,P_2 \in J(\Q)$ generating a subgroup isomorphic to~$\Z/2\Z \times \Z/2\Z$ and we are looking for a fake 4-torsion point~$Q$ such that~$2Q = P$. Suppose that~$Q$ is defined over a field~$K$ of degree~12 and that its coordinates have minimal polynomials with coefficients of size at most~100.

Note that there are about~$200^{13} \approx~10^{30}$ polynomials of degree~12 with coefficients of size at most~100.
In order to have a good chance to reconstruct~$Q$ from its reductions modulo different primes~$p_i$ in {\bf Step~8}, we want the modulus~$N = \prod_i p_i$ to be significantly greater than~$10^{30}$. That means that we should take at least 6 medium sized primes~$p_1, \ldots, p_6$.

We need~$Q$ to actually be defined over~$\F_{p_i}$ rather than an extension field. The probability of a random prime~$p_i$ having this property is the same as the probability that a random element of the Galois group of~$K$ fixes one of the 12 roots of the minimal polynomial of the coordinates. In the worst case, this probability is $\tfrac1{12}$, meaning that we actually needed to pick about~72 medium sized primes and find the right combination of the 6 primes among the ${72 \choose 6} \approx~10^8$ possibilities.

For each prime~$p_i$ there are at least 4 candidate points in~$D_{p_i}$, namely $\overline{Q}$, $\overline{Q + P_1}$, $\overline{Q + P_2},$ and $\overline{Q + P_1 + P_2}$. Of the at least~$4^6 = 4096$ combinations of points we can pick, only~4 of them can be used to successfully reconstruct a point of degree~12. This part of the combinatorial explosion has been mitigated by the replacement of~$D_{p_i}$ by~$D_{p_i} / K_{p_i}$ in {\bf Step~6}, but there is still the problem that some of the other fake torsion points will show up in some of the~$D_{p_i}$. 
\end{remark}

\subsection{Finding torsion points: the analytic method}

The following analytic method to find torsion points has the advantage that there will be no combinatorial explosion of trying to combine torsion points modulo different primes into a torsion point over a number field. The downside is that we cannot utilise the fact that~$J_{p_i}(\F_{p_i})[\ell^n]$ is typically much smaller than~$J(\C)[\ell^n]$. Recall that we assumed the existence of a point~$P \in C(\Q)$ and that we picked such a point at the start.

\begin{algorithm}\label{algo:analytic}
{\em Input: a prime number~$\ell$, a subgroup~$K$ of known torsion points inside~$J[\ell](\Q)$, and a point~$Q \in J(\Q)$ as described above.

\vspace{-0.5cm} Output: a (possible empty) list of nonzero points~$R \in J(\overline{\Q})$ such that~$\ell R = Q$.}

\vspace{-0.5cm}\begin{itemize}[labelwidth=\widthof{{\bf Step~2.}},leftmargin=!]
\item[{\bf Step~1.}] Choose some~$P_1, P_2, P_3 \in C(\C)$ as in Subsection \ref{sect:complex-arithmetic}. Compute an Abel-Jacobi map~$\iota \colon J(\C) \to \C / \Lambda$ with base points~$P_1,P_2,P_3$ and compute the image~$\iota(Q)$, using the methods in \texttt{Magma} implemented by Neurohr, see~\cite{Neurohr}.
\item[{\bf Step~2.}] Pick an element~$t$ in each class in~$\left(\tfrac1\ell \iota(Q) + \tfrac1\ell \Lambda\right) / \iota(K)$ and apply the following three steps for each element.
\item[{\bf Step~3.}] Use Algorithm \ref{algo:inv-AJ} to find points~$R_1', R_2', R_3' \in C(\C)$ such that~$\iota(R_1', R_2', R_3')$ is close to~$t$. Use Algorithm~\ref{algo:convert} to write~$\iota(R_1', R_2', R_3')$ as~$R_1 + R_2 + R_3 - 3P$ for some~$R_1, R_2, R_3 \in C(\C)$.
\item[{\bf Step~4.}] Compute Mumford-like coordinates for~$R$, i.e., compute the product polynomial~$P_x \coloneqq \prod_i (T - x(R_i))$ in~$\C[T]$ and a polynomial~$P_y$ of degree~2 such that~$P_y(x(R_i)) = y(R_i)$.
\item[{\bf Step~5.}] Use a short lattice vector algorithm to try to find algebraic relations for the coefficients of~$P_x$ and~$P_y$. If this succeeds, reconstruct the corresponding point in~$J(\overline{\Q})$, which we call~$R_t$. 
\item[{\bf Step~6.}] After finishing the loop described in Step~2, output all~$R_t$ with~$\ell R_t = Q$ that we found in Step 5 during the computation.
\end{itemize}
\end{algorithm}

In practice, to recognise torsion points over number fields, we need several hundreds of digits of precision. This together with the sheer number of potential points we need to try (typically~$\ell^6$) makes the method slow in practice and only practical for~$\ell = 2$ or~$\ell = 3$.

\section{Results}\label{sect:results}

The algorithm has been implemented by the author in \texttt{Magma} and is publicly available at \cite{code}. It has been run on a data set consisting of 82240 plane quartic curves found by Andrew Sutherland, see \cite{Database3}. As a result, the rational torsion subgroup has been computed successfully for 81357 of the Jacobians of these curves. The total runtime for this computation, using version 2.25-7 of \texttt{Magma}, was approximately 8 core months and has been done in parallel, using at most 20 GB of memory per process, on a machine of the Simons Collaboration at Massachusetts Institute of Technology having a AMD EPYC 7713 CPU. For each computed torsion group a proof has been stored in the form of a list of primes, and a list torsion points over~$\Q$ and over some number fields which together can be used to prove the completeness of the list of rational torsion points using Lemma \ref{lemma:reduction_inj}. 
The vast majority of the time spent in Algorithms~\ref{algo:algebraic} and~\ref{algo:analytic} was going through a huge number of potential torsion points over number fields.
Hence, after these points have been found, the proofs can be verified significantly faster than it took to construct them.
The proofs are stored in the file \cite[\texttt{extra/proofs.tar.xz}]{code}. In Table~\ref{table:orders}, you can see the 96 different group structures of the torsion groups that we found and how often each of them occurred. In this table, the notation $n_1, n_2, \ldots$ in the top row is referring to the group $\Z/n_1\Z \times \Z/n_2\Z \times \cdots$, and the number on the bottom row indicates how often we found this group.





\begin{table}[ht]
\begin{center}
\begin{tabular}{c|c|c|c|c|c|c|c|c|c|c|c}
1 &2 &3 &4 &2,2 &5 &6 &7 &8 &4,2 &2,2,2 &9 \\ \hline
58702 &8855 &5101 &2404 &291 &1106 &1435 &616 &431 &264 &2 &379
\end{tabular}\\[0.3cm]

\begin{tabular}{c|c|c|c|c|c|c|c|c|c|c|c|c|c|c}
3,3 &10 &11 &12 &6,2 &13 &14 &15 &16 &8,2 &4,4 &4,2,2 &17 &18 &6,3 \\ \hline
73 &214 &51 &324 &58 &42 &130 &78 &21 &67 &35 &6 &7 &90 &37
\end{tabular}\\[0.3cm]

\begin{tabular}{c|c|c|c|c|c|c|c|c|c|c|c|c|c|c}
19 &20 &10,2 &21 &22 &23 &24 &12,2 &6,2,2 &25 &5,5 &26 &27 &9,3 &28 \\ \hline
30 &43 &8 &55 &17 &2 &55 &30 &1 &11 &3 &14 &9 &10 &23
\end{tabular}\\[0.3cm]

\begin{tabular}{c|c|c|c|c|c|c|c|c|c|c|c|c|c}
14,2 &29 &30 &31 &32 &16,2 &8,4 &8,2,2 &4,4,2 &33 &35 &36 &18,2 &12,3 \\ \hline
8 &1 &24 &3 &1 &3 &14 &1 &1 &12 &5 &9 &6 &12
\end{tabular}\\[0.3cm]

\begin{tabular}{c|c|c|c|c|c|c|c|c|c|c|c|c|c|c}
6,6 &38 &39 &40 &20,2 &41 &42 &22,2 &45 &15,3 &48 &24,2 &49 &50 &51 \\ \hline
6 &4 &7 &9 &6 &1 &15 &2 &6 &1 &4 &10 &2 &2 &2
\end{tabular}\\[0.3cm]

\begin{tabular}{c|c|c|c|c|c|c|c|c|c|c|c|c|c}
52 &54 &18,3 &56 &28,2 &57 &60 &30,2 &62 &16,4 &8,4,2 &65 &66 &70 \\ \hline
2 &1 &1 &1 &3 &4 &7 &1 &1 &1 &5 &1 &3 &3
\end{tabular}\\[0.3cm]

\begin{tabular}{c|c|c|c|c|c|c|c|c|c|c}
24,3 &12,6 &75 &15,5 &40,2 &84 &96 &24,4 &14,7 &105 &40,4 \\ \hline
2 &3 &1 &1 &2 &1 &1 &2 &1 &1 &1
\end{tabular}
\end{center}
\caption{Torsion group statistics}
\label{table:orders}
\end{table}

We also kept track of the number of cases in which we needed to find a fake torsion point in order to prove the upper bound for the torsion group. For~3440 of the curves, we needed one or more fake torsion points and in Table~\ref{table:fake-torsion} you can find maximum degrees for these fake torsion points, and how often they occurred.

\begin{table}[ht]
\begin{center}
\begin{tabular}{c|c|c|c|c|c|c|c|c}
none	&deg.\ 2	&deg.\ 3	&deg.\ 4	&deg.\	6	&deg.\	8	&deg.\	9	&deg.\	10	&deg.\	12	\\	\hline
77917	&1386	&191	&478	&52	&70	&1	&10	&1217
\end{tabular}

\end{center}
\caption{Fake torsion statistics: maximum degree of fake torsion points}
\label{table:fake-torsion}
\end{table}

For the majority of the 883 missing plane quartics, the reason that we could not compute their torsion subgroup was the failure to find a rational point on the curve. For 735  of these curves, we could verify the nonexistence of rational points by proving that there are no points over some local field. For the remaining 148 curves, which might give rise to counterexamples for the Hasse principle, we did not attempt to verify the nonexistence of rational points. This problem has recently been studied in more detail in~\cite{BruinCreutz}.

To conclude this section we exhibit an example where we managed to find a torsion point over a degree~12 number field in order to certify the correctness of the computed rational torsion subgroup.

\begin{example}
Consider the smooth plane quartic~$C \colon f = 0$ with
$$f = x^3y - xy^3 + y^4 + x^3z + 2x^2yz + 2xy^2z - y^3z + x^2z^2 + 2xyz^2 + y^2z^2 - 2xz^3 - yz^3 + z^4.$$
Its Jacobian~$J$ modulo~11 has~1772 points, and~$J$ modulo 67 has~274944 points. As the primes~11 and 67 are both primes of good reduction, this implies that the torsion subgroup of~$J$ can have at most order~$\mathrm{gcd}(1772, 274944) = 4$.

Besides~0, we find a second rational torsion point $$T_2 = \left(\tfrac{1}{19}(-10\theta + 4) : -1 : 1\right) + \left(\tfrac{1}{19}(-10\bar{\theta} + 4) : -1 : 1\right) - 2 \cdot (1:0:0),$$ where~$\theta$ and~$\bar\theta$ are zeros of~$x^2 - \tfrac{27}{10}x - \tfrac{1713}{100}$. Looking at the reduction modulo~11, we easily find that there are no other points of order~2. After looking at a lot of different primes and seeing that~$T_2$ has a~2-divisor modulo each of these primes, we suspect that~$T_2$ might have a (fake)~2-divisor.

After about an hour of computation time, our program finds a torsion point~$T_4$ over a degree~12 number field~$K$ defined by adjoining to~$\Q$ a root of $$x^{12} - 5x^{10} - 2x^9 - 20x^8 - 20x^7 + 7x^6 - 50x^5 + 26x^4 - 40x^3 - 58x^2 - 24x - 15.$$
This point satisfies~$2T_4 = T_2$. As the prime 67 splits into four primes of residue degrees 67, 67, $67^2$, and~$67^8$ in the ring of integers of~$K$, the point~$T_4$ explains two of the~2-divisors of~$T_2$ modulo 67. As there are only two~2-divisors of~$T_2$ in~$J \bmod 67$, we conclude that~$T_2$ doesn't have a~2-divisor over~$\Q$, and~$\{0, T_2\}$ is the full torsion subgroup of~$J$.
\end{example}

\end{document}